\newtheorem{thm}{Theorem}[section]
\newtheorem{defn}[thm]{Definition}
\newtheorem{cor}[thm]{Corollary}
\newtheorem{prop}[thm]{Proposition}
\newtheorem{lemma}[thm]{Lemma}
\title[A trace-like invariant for represen\textbf{t}ations of Hopf algebras]{A trace-like invariant for represen\textbf{t}ations of Hopf algebras}
\author{Andrea Jedwab}
\address{Claremont McKenna College, Claremont, CA 91711}
\email{ajedwab@cmc.edu}
\begin{document}

\maketitle

%-----------------Introduction---------------------------------

\section{Introduction}
 
In this paper we introduce a trace-like invariant for the irreducible representations of a finite dimensional complex Hopf algebra $H.$  We do so by considering the trace of the map $S_V$ induced by the antipode $S$ on the endomorphisms $End(V)$ of a self-dual module $V$. We also compute the values of this trace for the representations of two non-semisimple Hopf algebras: $u_q(sl_2)$ and $D(H_n(q)),$ the Drinfeld double of the Taft algebra. 

The trace-like invariant $\mu$ that we consider here has some properties in common with the Frobenius-Schur indicator, which has been studied for example in \cite{GM}, \cite{KMM} and \cite{MaN}. The indicator  has been a very useful tool in the study of Hopf algebras and their representations and we expect $\mu$ to be a useful invariant as well. When $H$ is semisimple, the Frobenius-Schur indicator $\nu(V)$ of an irreducible $H$-representation $V$ is an invariant of the tensor category of representations of $H.$ When $V$ is self-dual it was shown in \cite{LM} that, for $S_V$ as above,  $\displaystyle \nu(V)=\frac{Tr(S_V)}{dim(V)}$ and consequently $Tr(S_V)=\pm dim(V)\in \mathbb{Z}.$  We define $\mu(V)$ to be $Tr(S_V)$ for each irreducible representation of any finite dimensional Hopf algebra. %over $\mathbb{C}.$ 
We will see that in the case when $H$ is not semisimple $Tr(S_V)$ might no longer be an integer, although it is still an algebraic integer which depends on the order of $S$ in $H$. We note that when $H$ is not semisimple but is pivotal, \cite{NS} define a generalized indicator which has integer values.  Thus the above formula for $\nu(V)$ does not extend to the more general setting of pivotal Hopf algebras.

Here we determine the trace invariant for the irreducible representations of the Drinfeld double of the Taft algebras $D(H_n(q^2)).$ We do this by considering the projection of $D(H_n(q^{2}))$ onto $u_q(sl_2).$ We will determine the invariant for the  $D(H_n(q^{2}))$-modules by identifying them with $u_q(sl_2)$-modules and computing the invariant for the $u_q(sl_2)$-modules. As a consequence we see that for each simple $u_q(sl_2)$-module $V,\ \displaystyle  \mu(V)=(-1)^{dim(V)+1} dim_q(V)$ where $dim_q(V)$ is the quantum dimension of $V.$

%-------------Section-------------------------------------------

\section{Definition of the invariant $\mu$}

Let $H$ be a finite dimensional Hopf algebra and $V$ a simple self-dual $H$-module with corresponding irreducible representation $\varphi :H\to End(V).$ Since $H$ is finite dimensional, the antipode $S:H\to H$ is bijective. $S$ induces an antialgebra morphism $S_V:End(V)\to End(V)$ in the following way: 

\begin{lemma}
Let $I=ker(\varphi: H\to End(V))$ and $I_*=ker(\varphi_*:H\to End(V^*)),$ where $V^*$ is the dual of $V$. Then $\overline{S}(I_*)=I.$
In particular, if  $V$ is a self-dual simple module, $S$ induces a map on $H/I.$ %\simeq \displaystyle M_{n}(\mathbb{C}). 
\end{lemma}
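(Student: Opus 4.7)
\smallskip

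\noindent\textbf{Proof proposal.} The plan is to unwind the definition of the dual representation and read off the relation between the two kernels directly, then use self-duality to collapse them into a single ideal on which $S$ restricts.

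First I would recall that the $H$-action on $V^*$ is defined by $(h\cdot f)(v)=f(S(h)\cdot v)$ for $h\in H$, $f\in V^*$, $v\in V$; equivalently, $\varphi_*(h)=\varphi(S(h))^{t}$, the transpose of $\varphi(S(h))$ on $V^*$. Since transposition is an anti-isomorphism $\mathrm{End}(V)\to\mathrm{End}(V^*)$, it is in particular injective, so
\[
h\in I_* \iff \varphi(S(h))^{t}=0 \iff \varphi(S(h))=0 \iff S(h)\in I.
\]
Thus $I_*=S^{-1}(I)$. Because $H$ is finite dimensional, $S$ is bijective, and applying $S$ to both sides gives $S(I_*)=I$, which is the content of the first assertion (with $\overline{S}$ read as the map on $H$ induced by the antipode).

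For the second statement, self-duality of $V$ means $V\cong V^*$ as $H$-modules, so the two representations $\varphi$ and $\varphi_*$ factor through the same image and in particular have the same kernel, i.e. $I=I_*$. Combining this with $S(I_*)=I$ yields $S(I)=I$, so $S$ restricts to a bijection of $I$ onto itself and therefore descends to a well-defined (anti-algebra) endomorphism $\overline{S}\colon H/I\to H/I$.

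The only thing that requires a touch of care is the first equivalence chain: one should be explicit that the transpose map $\mathrm{End}(V)\to\mathrm{End}(V^*)$ is a linear isomorphism, which is what allows the passage from $\varphi(S(h))^{t}=0$ to $\varphi(S(h))=0$. Beyond that, the argument is essentially a chain of definitions, so there is no serious obstacle; the content of the lemma is really a bookkeeping statement that packages the self-duality hypothesis into the existence of an induced map $S_V$ on $\mathrm{End}(V)\cong H/I$ that will be used to define $\mu(V)=\mathrm{Tr}(S_V)$ in the sequel.
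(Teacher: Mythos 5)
Your proof is correct and follows essentially the same route as the paper: both arguments unwind the definition of the dual action $(h\cdot f)(v)=f(S(h)\cdot v)$ to relate the two kernels, the only difference being that you package the two inclusions into a single equivalence chain via the transpose anti-isomorphism, whereas the paper checks $S(I_*)\subset I$ and $I\subset S(I_*)$ separately (the latter using $S^{-1}$). Your explicit treatment of the self-dual case ($I=I_*$, hence $S(I)=I$) correctly fills in the detail the paper leaves implicit.
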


\begin{proof}
Let $h\in I_*.$ Then $h$ acts as $0$ on $V^*$ and $(h\cdot f)(v)=0,\ \forall f\in V^*,\ v\in V.$ So by definition of the action of $H$ on the dual, for each $v\in V,\ f(S(h)\cdot v)=0\ \forall f\in V^*$ and thus $S(h)\cdot v=0.$ We conclude that $S(I_*)\subset I.$

Now let $h\in I.$ Consider the inverse $S^{-1}$ of the antipode $S$. Then $(S^{-1}(h)\cdot f)(v)=f(h\cdot v)=0,\ \forall v\in V,\ f\in V^*.$ So 
$S^{-1}(h)\cdot f=0\ \forall f$ and $S^{-1}(h)\in I_*.$ Thus $I\subset S(I_*).$     
\end{proof}

So by fixing a basis of a simple self-dual $H$-module $V,$ we can consider the antimorphism induced by the antipode on $End(V)\simeq H/I.$ 

\begin{defn}
Given a self-dual simple $H$-module $V,$ we define $S_V:End(V)\to End(V)$ to be the map induced by $S$ on  $End(V).$  Explicitly, given an endomorphism $\alpha,\ \exists h\in H$ such that 
$\varphi(h)=\alpha.$ Then  $S_V(\alpha)=\varphi (S(h))\in End(V).$ 
\end{defn} 

$S_V$ is the unique map so that the following diagram commutes:

$$
\xymatrix{
H \ar[d] _S \ar[r] ^-\varphi & End(V) \ar@{-->}[d] ^{S_V}\\
H \ar[r]^-\varphi & End(V) }
$$

\begin{defn}\label{ind}
Let $H$ be a Hopf algebra over $\mathbb{C} .$ Given a simple finite dimensional $H$-module $V$, we define the trace invariant associated to $V$ by 
$$
\mu(V):= \left\{ \begin{array}{cl}
trace(S_V), & \mbox{if $V$ is self-dual} \\
 0, & \mbox{otherwise.} 
\end{array} \right .
$$ 
\end{defn}

In the following theorem we determine a bilinear form and an endomorphism associated to a self-dual $H$-module $V$ that allow us to compute $\mu(V)$ in a different way. This result is presumably known but we do not know a reference. The proof we give is similar to one shown to us by Montgomery and Schneider. 

\begin{thm}
Let $V$ be a self-dual simple $H$-module. %Consider $\nu(V)$ as defined in \ref{ind} and let $\langle, \ \rangle$ be the bilinear form given by Theorem \ref{MSch}. 
Then there exists  a non-degenerate $H$-invariant bilinear form $\langle \ ,\ \rangle: V\otimes V\to \mathbb{C}$ and a map $u\in End(V)$ with $\langle v,w\rangle =\langle w,u(v)\rangle$ for all $v,\ w \in V$ such that $ \mu (V)=Tr(u).$
\end{thm}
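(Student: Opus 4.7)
The plan is to build the form from any $H$-isomorphism $V \to V^*$, extract $u$ from non-degeneracy, and compute $Tr(S_V)$ in a basis of $End(V)$ adapted to the form. For the form itself: pick an $H$-module isomorphism $\phi : V \to V^*$ (which exists by self-duality) and define $\langle v, w\rangle := \phi(v)(w)$. Non-degeneracy is immediate since $\phi$ is a linear bijection, and the $H$-linearity of $\phi$, combined with the standard $H$-action on $V^*$, unfolds to the adjunction identity $\langle hv, w\rangle = \langle v, S(h)w\rangle$ for all $h \in H$, $v, w \in V$; applied with $h_{(1)}, h_{(2)}$ this recovers the expected $H$-invariance $\langle h_{(1)} v, h_{(2)} w\rangle = \epsilon(h)\langle v, w\rangle$. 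Non-degeneracy then forces a unique $u \in End(V)$ with $\langle v, w\rangle = \langle w, u(v)\rangle$: explicitly $u = (\phi')^{-1}\circ \phi$, where $\phi'(v)(w) := \langle w, v\rangle$ is a second linear isomorphism $V \to V^*$.

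The main content is to describe $S_V$ intrinsically via the form. Since $V$ is simple over the algebraically closed field $\mathbb{C}$, Burnside's theorem gives that $\varphi: H \to End(V)$ is surjective. For any $\alpha = \varphi(h) \in End(V)$ we have $S_V(\alpha) = \varphi(S(h))$, and the adjunction yields
\[
\langle w, S_V(\alpha)(v)\rangle \;=\; \langle w, S(h)\cdot v\rangle \;=\; \langle h\cdot w, v\rangle \;=\; \langle \alpha(w), v\rangle
\]
for all $v, w \in V$. This characterizes $S_V$ on all of $End(V)$ using only the form, and is the one place the Hopf structure is genuinely used.

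Finally I would extract the trace. The form gives a linear isomorphism $V \otimes V \to End(V)$ via $v \otimes w \mapsto T_{v,w}$, where $T_{v,w}(x) := \langle w, x\rangle v$, and the displayed identity forces $S_V(T_{v,w}) = T_{u(w), v}$ by a short direct check (both sides pair identically with every $y$ against every $x$). Picking a basis $\{e_i\}$ of $V$ and writing $u(e_j) = \sum_k u_{kj} e_k$, one obtains $S_V(T_{e_i,e_j}) = \sum_k u_{kj}\, T_{e_k, e_i}$; the coefficient of $T_{e_i, e_j}$ in its own image is nonzero only when $i = j$, in which case it equals $u_{ii}$, so $\mu(V) = Tr(S_V) = \sum_i u_{ii} = Tr(u)$. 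The one content-bearing step is the intrinsic description of $S_V$ via the form in the second paragraph; once that is in place, the trace computation is an index chase.
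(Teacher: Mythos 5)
Your proof is correct, but it runs in the opposite direction from the paper's. The paper starts from $S_V$ itself: identifying $End(V)$ with $M_n(\mathbb{C})$, it applies Skolem--Noether to write $S_V(X)=QX^TQ^{-1}$, \emph{defines} the form by $\langle v,w\rangle=v^TQ^{-1}w$, sets $U=Q(Q^{-1})^T$, and only at the end verifies $H$-invariance via the adjunction $\langle \varphi(h_1)v,\varphi(h_2)w\rangle=\langle v,\varphi(S(h_1)h_2)w\rangle$. You instead start from a self-duality isomorphism $\phi:V\to V^*$, so the form and its $H$-invariance come for free, and the real content becomes the intrinsic characterization $\langle w,S_V(\alpha)v\rangle=\langle\alpha(w),v\rangle$ (valid on all of $End(V)$ by Burnside), which exhibits $S_V$ as the adjoint with respect to the form. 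Your trace computation in the rank-one basis $T_{v,w}$ is the coordinate-free counterpart of the paper's matrix identity $Tr\bigl(X\mapsto QX^TQ^{-1}\bigr)=Tr\bigl(Q(Q^{-1})^T\bigr)$; both index chases are correct. The trade-off: your argument is cleaner and basis-free, while the paper's explicit matrix $Q$ is not wasted --- it is computed concretely for the $u_q(sl_2)$-modules in the next section, where the entries $Q_{r,l+1-r}$ drive the evaluation of $\mu(V_i)$. One small point worth making explicit if you write this up: the well-definedness of $S_V$ (i.e., $S(\ker\varphi_*)=\ker\varphi$) is supplied by the paper's Lemma 2.1 and you are implicitly relying on it when you write $S_V(\alpha)=\varphi(S(h))$.
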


\begin{proof}
First, let $\phi:End(V)\to M_n(\mathbb{C})$ be the isomorphism mapping an endomorphism to its matrix in some fixed basis. We denote the map $\phi S_V\phi^{-1} : M_n(\mathbb{C})\to M_n(\mathbb{C})$ by $S_V$ also. By the Skolem-Noether theorem, $\exists Q\in GL_n(\mathbb{C})$ such that $\displaystyle S_V(X)=QX^TQ^{-1}\ \forall X\in M_n(\mathbb{C}),$ where $X^T$ is the transpose of $X.$ If we let $U=Q(Q^{-1})^T$ then $S_V^2(X)=UX^TU^{-1},\ \forall X\in M_n(\mathbb{C}).$
Consider the bilinear form given by $$\langle \ , \ \rangle: V \times V \to \mathbb{C}:\ \langle v,w\rangle=v^TQ^{-1}w.$$ 
Then 
\begin{eqnarray*}
\langle v,w\rangle & = & v^TQ^{-1}w \in \mathbb{C}\\
                                 & = &w^T(Q^{-1})^Tv\\
                                 & = &w^TQ^{-1}Q(Q^{-1})^Tv\\
                                 & = &w^TQ^{-1}Uv\\
                                 & = &\langle w,Uv\rangle. 
\end{eqnarray*}

Thus $u=\phi^{-1}(U)\in End(V)$ is the required map. $Tr(u)=Tr(S_V)$ since $S_V(X)=QX^TQ^{-1},$ which implies $Tr(S_V)=Tr(Q(Q^{-1})^T)= Tr(U).$ 

The only thing left to prove is that the bilinear form $\langle \ ,\ \rangle: V\times V \to \mathbb{C}$ is $H$-invariant. 

Let $h\in H$ and $v,w\in V$ and $\varphi:H\to M_n(\mathbb{C})$ be the representation corresponding to $V.$ Then 
\begin{eqnarray*}
h\cdot \langle v,w\rangle & = & \langle h_1\cdot v, h_2\cdot w\rangle\\
                                             & = &\langle \varphi(h_1)(v),\varphi(h_2)(w)\rangle\\
                                             & = &\langle v, S_V(\varphi(h_1))\circ \varphi(h_2)(w)\rangle\\
                                             & = &\langle v,\varphi (S(h_1)h_2)(w)\rangle\\
                                             & = &\epsilon(h)\langle v,w\rangle. 
\end{eqnarray*}  

So the form is $H$-invariant. 
\end{proof}

Each self-dual irreducible representation of $H$ admits a non-degenerate, bilinear form that is $H$-invariant. In general the form is not necessarily symmetric or skew-symmetric, instead we have the map $u$ whose eigenvalues determine the value of the trace $\mu(V).$ In the case of pivotal Hopf algebras, $S^2$ is inner via an element $g\in G(H)$ such that $S^2(h)=ghg^{-1} \ \forall h\in H.$ We could consider $\varphi(g)$ instead of the endomorphism $u.$ However the above theorem guarantees that $tr(u)=tr(S_V)$ and $u$ arises from the bilinear form, that is why we utilize this specific map $u.$  

Now suppose that $\Phi: H'\to H$ is a surjective Hopf algebra homomorphism and let $V$ be a simple $H$-module with corresponding representation $\varphi: H\to End(V).$ Then 
 $$\varphi' :=\varphi\Phi: H'\to End(V)$$ 
 determines an irreducible representation of $H'.$ We will denote the $H'$-module structure of the vector space $V$ by $V'$ when necessary to differentiate it from the $H$-module. We get the following commutative diagram:
$$
\xymatrix{
H' \ar[d] _{S'} \ar[r] ^\Phi \ar@/^2pc/[rr] ^{\varphi '} & H \ar[d] _S \ar[r] ^-\varphi & End(V) \ar[d] ^{S_V=S_{V'}}\\
H' \ar[r] ^\Phi \ar@/_2pc/ [rr] _{\varphi '} & H \ar[r]^-\varphi & End(V) }
$$

By construction,  $\mu (V)=\mu (V')$ and we can compute $\mu$ in the most convenient case and have the information for both the $H$ and the $H'$-module. We want to determine the indicators for the irreducible representations of the Drinfeld double of the Taft algebras $D(H_n(q^2)).$ We do this by considering the projection of $D(H_n(q^{2}))$ onto $u_q(sl_2).$ We will determine the invariant for the  $D(H_n(q^{2}))$-modules by identifying them with $u_q(sl_2)$-modules and computing the invariant for these last ones. 

%---------------------Section-------------------------

\section{The Hopf algebra $u_q(sl_2)$}

We start by  computing the indicators of the irreducible representations of  $u_q(sl_2),$ where $q$ is a primitive $nth$-root of $1,$ following the description given in \cite[p 134-138]{K}. 

First we introduce  some notation for the $q$-binomial coefficients that will be needed later, for $q$ as above. For $i\in \mathbb{Z},$ set 
$$[i]=\frac{q^i-q^{-i}}{q-q^{-1}}.$$
Note that $[-i]=-[i].$ We have the following versions of factorial and binomial coefficients \cite[p 122]{K}: 
$$[0]!=1,\ [k]!=[1][2]\cdots [k]\ \mbox{for}\  k>0$$
and 
$$\left[ \begin{array}{c} 
i\\
k\\
\end{array} \right] = \frac{[i]!}{[k]![i-k]!},\ \mbox{for}\ 0\leq k
\leq i.$$

Consider the algebra $U_q=U_q(sl_2),$ generated by the variables $E,F,K,K^{-1}$ with relations
$$KK^{-1}=K^{-1}K=1,$$
$$KEK^{-1}=q^2E,\ KFK^{-1}=q^{-2}F,$$
and
$$[E,F]=\frac{K-K{-1}}{q-q^{-1}}.$$

The Hopf algebra structure of $U_q(sl_2)$ is determined by
$$\Delta (E)=1\otimes E+E\otimes K, \ \ \Delta (F)=K^{-1}\otimes F+F\otimes 1,$$
$$\Delta (K)=K\otimes K, \ \ \Delta (K^{-1})=K^{-1}\otimes K^{-1},$$
$$\epsilon (E)=\epsilon (F)=0,\ \ \epsilon (K)=\epsilon (K{-1})=1,$$
and
$$S(E)=-EK^{-1},\ S(F)=-KF,\ S(K)=K^{-1},\ S(K^{-1})=K.$$

Let $v$ be a highest weight vector of weight $q^i.$ %$\epsilon q^i,\ \epsilon=\pm 1.$  
Consider the vectors $\displaystyle v_o=v,\ v_j=\frac{1}{[j]!}F^j(v),\ 0<j<i.$ Then $V_i$ is a $U_q$-module of dimension $i+1$ generated by  $v.$ The action of  $U_q$ on the basis $\{v_j\}$ is given by
$$Kv_j=q^{i-2j}v_j,$$
$$Ev_j=[n-p+1]v_{j-1},$$
and
$$Fv_{j-1}=[j]v_j.$$

We will consider an $n^3$-dimensional quotient of $U_q$:

\begin{defn}
The algebra $u_q=u_q(sl_2)$ is the quotient of $U_q$ by the two sided ideal $<E^n, F^n,K^n-1>.$
\end{defn}

For the algebra $u_q,\ S^2:u_q\to u_q$ is an inner automorphism given by conjugation by $K$. In particular, since $K$ is a group-like element, $u_q$ is pivotal. 

The following theorem gives a comprehensive list of simple $u_q$-modules, in the cases we will study.

\begin{thm}\cite[Theorem VI.5.7]{K}
Let $n$ be odd. Any non-zero simple finite dimensional $u_q$- module is isomorphic to a module of the form $V_i$ with $0\leq i< n-1,$ or $V(q^{-1}),$ where $V(q^{-1})$ is an $n$-dimensional vector space, with module structure 
$$Kv_j=q^{-1-2j}v_j,$$  
$$Ev_{j+1}=\frac{q^{-j-1}-q^{j+1}}{q-q^{-1}}[j+1]v_j,$$
$$Fv_j=v_{j+1},$$
 $$Ev_0=0,\ Fv_{n-1}=0,\ \mbox{and} \ Kv_{n-1}=q^{1-2n}v_{n-1}.$$
\end{thm}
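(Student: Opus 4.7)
The plan is to follow the standard highest-weight classification, adapted to the root-of-unity setting, so I sketch the main steps rather than carry out every commutator computation. First I would produce a highest weight vector in any simple finite-dimensional $u_q$-module $V$: the relation $E^n = 0$ forces $E$ to be nilpotent on $V$, and $KE = q^2 EK$ shows $K$ stabilizes $\ker E$; since $K^n = 1$, $K$ is diagonalizable on $\ker E$, and one obtains $v_0$ with $E v_0 = 0$ and $K v_0 = \lambda v_0$. Here $\lambda$ is an $n$-th root of $1$, and because $n$ is odd with $q$ a primitive $n$-th root of unity, every such root is already a power of $q$, so $\lambda = q^i$ for a unique $0 \le i \le n-1$. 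This is exactly where the hypothesis that $n$ is odd enters: for even $n$ one must also track a sign $\pm 1$, doubling the number of simples.

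Next, I would set $v_j := F^j v_0$ for $j \ge 0$, so that $v_n = 0$ because $F^n = 0$ in $u_q$. The $v_j$ are $K$-eigenvectors of pairwise distinct eigenvalues $\lambda q^{-2j}$, hence linearly independent while nonzero; by simplicity they span $V$. A short induction using $[E,F] = (K - K^{-1})/(q - q^{-1})$ gives $E v_{j+1} = c_{j+1} v_j$ with $c_{j+1} = \sum_{k=0}^{j}[i-2k]$, and the classical $q$-identity $\sum_{k=0}^{j}[i-2k] = [j+1][i-j]$ puts this in closed form. The dimension of $V$ is then the smallest $j+1 \ge 1$ for which $c_{j+1} = 0$, since at that point $\{v_0,\dots,v_j\}$ is a nonzero $u_q$-submodule, forced to equal $V$ by simplicity.

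Two cases now follow from inspecting when $[j+1]$ or $[i-j]$ first vanishes modulo $n$. If $0 \le i \le n-2$, then $[i-j] = 0$ first at $j = i$, giving $\dim V = i+1$; rescaling $v_j \mapsto v_j/[j]!$ (legitimate since $[j]! \neq 0$ for $j \le n-1$) recovers $E v_j = [i-j+1]v_{j-1}$ and $Fv_{j-1} = [j]v_j$, i.e.\ precisely the module $V_i$ of the preamble. If $i = n-1$ (so $\lambda = q^{-1}$), then using $[n-1-j] = -[j+1]$ both factors collapse only at $j+1 = n$, so $\dim V = n$; substituting $\lambda = q^{-1}$ into the unnormalized recursion yields $c_{j+1} = -[j+1]^2 = \frac{q^{-j-1} - q^{j+1}}{q - q^{-1}}[j+1]$, matching the displayed formulas for $V(q^{-1})$. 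The main obstacle is precisely this $i = n-1$ case, where the divided-power normalization breaks down at $j = n-1$ and one must verify directly that no $c_{j+1}$ vanishes prematurely for $1 \le j+1 < n$; this is guaranteed exactly by the oddness of $n$, which forces $[m] = 0$ only when $n \mid m$ and thereby rules out accidental early vanishing from either factor.
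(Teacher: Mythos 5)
The paper offers no proof of this statement at all---it is quoted directly from \cite[Theorem VI.5.7]{K}---so the only thing to compare against is the cited source, and your sketch is essentially Kassel's own highest-weight argument: extract a $K$-eigenvector $v_0\in\ker E$, use oddness of $n$ to write its eigenvalue as $q^i$, push down by $F$, and read off the dimension from the first vanishing of $c_{j+1}=[j+1][i-j]$. The computations check out, including $c_{j+1}=-[j+1]^2=\frac{q^{-j-1}-q^{j+1}}{q-q^{-1}}[j+1]$ when $\lambda=q^{-1}$, so the route is correct and is the standard one. Two local slips are worth fixing. First, when $c_{j+1}=0$ the span of $\{v_0,\dots,v_j\}$ is not visibly closed under $F$ (that requires $v_{j+1}=0$, which is exactly what needs proving); the submodule to invoke is $\mathrm{span}(v_{j+1},v_{j+2},\dots)$, which is stable under $F$ and $K$, killed by $E$ because $Ev_{j+1}=c_{j+1}v_j=0$, and misses $v_0$, hence is zero by simplicity, forcing $v_{j+1}=0$ and $\dim V=j+1$. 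Second, the divided-power normalization does not actually ``break down at $j=n-1$'': one has $[n-1]!\neq 0$, and the unnormalized presentation of $V(q^{-1})$ is merely Kassel's convention; the genuine content of the $i=n-1$ case is, as you say at the end, that $[m]=0$ only when $n\mid m$ (using $n$ odd), so no $c_{j+1}$ vanishes before $j+1=n$. Neither point affects the validity of the argument.
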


So there is exactly one simple module of dimension $i,$ for each $1\leq i \leq n.$ In particular, each of them is self-dual. 

%Consider $E,\ F$ and $K$, the generators of $u_q$ as an algebra.  
%$$S^2(E)=S(-EK^{-1})=-S(K^{-1})S(E)=KEK^{-1},$$
%$$S^2(F)=S(-KF)=-S(F)S(K)=KFK^{-1}$$
%and  
%$$S^2(K)=S(K^{-1})=K=KKK^{-1}$$
%where $E,F$ and $K$ generate $u_q$ as an algebra. 

 For a simple module $V,$ fix a  basis $\mathcal{B}$ of $V$ and let $\varphi:u_q \to M_n(\mathbb{C})$ be the corresponding representation. Then the induced map $S_V^2: M_n(\mathbb{C})\to  M_n(\mathbb{C})$ is conjugation by $\varphi (K).$ % From the proof of Theorem \ref{MSch}, 
$\exists Q\in M_n(\mathbb{C})$ so that $S_V(X)=QX^TQ^{-1},$ and if $U:=Q(Q^{-1})^T,$ we have that $S_V^2(X)=UXU^{-1}$ for all $X.$ In particular, $U\cdot \varphi (K)^{-1}=\lambda I,\ \lambda \in \mathbb{C}$ and we have %The Theorem also states that 
$Tr(S_V)=Tr(U).$ We will use $\varphi (K),$ which is known,  to find $Tr(S_V).$   

Let $\varphi _i$ be the $(i+1)-$dimensional representation of $u_q.$ Then 
$$\varphi_i(K)= \left( \begin{array}{ccccc}
q^i & 0 & \cdot &\cdot & 0\\
0 & q^{i-2} & \cdot &\cdot & 0\\
\cdot & & \cdot & & \cdot \\
\cdot & & & \cdot & \cdot \\
0 & \cdot & \cdot & \cdot & q^{-i}\\ 
\end{array} \right ).
$$ 

Thus $$det(\varphi_i(K))= \prod_{j=0}^i q^{i-2j}=q^{\sum _j i-2j}=q^{i(i+1)}q^{-2\sum j}=q^{i(i+1)-i(i+1)}=1,\ \forall i.$$

As for the trace, $$Tr(\varphi_i(K))=\sum_{j=0}^i q^{i-2j}=q^i \sum_{j=0}^i (q^{-2})^j=[i+1].$$

For each $V_i,\ Tr(\varphi_i(K))$ agrees with the quantum dimension $dim_q(V_i),$ as described in \cite[p 364]{K}. We will show below that the trace invariant $\mu(V_i)$ agrees with these up to sign.  

Let  $Q$ be such that $S_{V_i}(X)=QX^TQ^{-1}$ and write $Q=(Q_{rs})\in M_l(\mathbb{C})$ and $l=i+1.$ Then  $S_{V_i}(X)Q=QX^T$ for all $X.$ Substituting $X=\varphi_i(E)$ and $X=\varphi_i(F)$ in the equation we find that

$$Q_{r,l+1-r}=(-1)^{r-1}\left[ \begin{array}{c} 
l-1\\
r-1\\
\end{array} \right]^{-1} q^{(r-1)i-\sum_{j=1}^{r-1}2j}
$$
and $$Q_{rs}=0 \mbox{ if } r+s\neq l+1.$$

Then, since $\left[ \begin{array}{c} 
l-1\\
r-1\\
\end{array} \right]
=
\left[ \begin{array}{c} 
l-1\\
l-r\\
\end{array} \right]$ for each $r,\ \displaystyle \frac{Q_{r,l+1-r}}{Q_{l+1-r,r}}=(-q)^{l-2r+1}$. 
\vspace{0.1in}

Now given that $U=Q(Q^{-1})^T,$ 
$$Tr(U)=\sum_{r=1}^l \frac{Q_{r,l+1-r}}{Q_{l+1-r,r}}=\sum_{r=1}^l (-q)^{l-2r+1}=
\left\{ \begin{array}{cl}
Tr(\varphi_i(K)),\ & \mbox{if}\ i=l-1\ \mbox{is even}\\
-Tr(\varphi_i(K)),\ & \mbox{if}\ i=l-1\ \mbox{is odd.}
\end{array}\right .
$$
\begin{thm}
Let $V_i$ be the $(i+1)$-dimensional simple module of $u_q,$ as described above. 
Then $$\mu (V_i)=\sum_{j=0}^i (-q)^{i-2j}.$$ %CAMBIE UN 1 POR -1 en el exponente
\end{thm}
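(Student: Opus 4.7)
The plan is to apply the theorem of Section~2, which identifies $\mu(V_i) = Tr(S_{V_i}) = Tr(U)$ for the endomorphism $U = Q(Q^{-1})^T$ built from the Skolem--Noether intertwiner $Q \in GL_l(\mathbb{C})$ satisfying $S_{V_i}(X) = QX^T Q^{-1}$, where $l = i+1$. So the whole task reduces to computing $Tr(U)$ explicitly.

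First I would pin down $Q$. Rewriting the defining relation as $S_{V_i}(X)\, Q = Q X^T$ and substituting $X = \varphi_i(E)$ and $X = \varphi_i(F)$ produces systems of linear equations on the entries $Q_{rs}$: the formulas $S(E) = -EK^{-1}$ and $S(F) = -KF$, combined with the explicit action of $E$, $F$, $K$ on the basis $\{v_j\}$ of $V_i$, force $Q_{rs} = 0$ whenever $r + s \neq l+1$ and determine the anti-diagonal entries $Q_{r,l+1-r}$ up to an overall scalar, in terms of $q$-binomial coefficients and powers of $q$. This bookkeeping step is where I expect the only real work to lie, since the recurrences coming from $E$ and $F$ must be combined and the signs and $q$-powers tracked carefully; but it is a finite calculation yielding a closed form for the anti-diagonal entries.

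Next I would exploit the anti-diagonal structure. Because $Q$ is anti-diagonal, so is $(Q^{-1})^T$, and the product $U = Q(Q^{-1})^T$ is therefore \emph{diagonal}, with $U_{rr} = Q_{r,l+1-r}/Q_{l+1-r,r}$. Using the symmetry of the $q$-binomial in its lower index, the binomial factors cancel in this ratio; the remaining exponent of $q$ simplifies (using $i = l-1$) to $l - 2r + 1$, and the sign collapses to $(-1)^{l - 2r + 1}$. Consequently $U_{rr} = (-q)^{l-2r+1}$.

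Finally, summing the diagonal entries gives $Tr(U) = \sum_{r=1}^{l}(-q)^{l-2r+1}$, and the substitution $l = i+1$, $j = r-1$ rewrites this as $\sum_{j=0}^i (-q)^{i-2j}$, matching the stated formula for $\mu(V_i)$.
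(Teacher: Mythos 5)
Your proposal is correct and follows essentially the same route as the paper: the paper likewise writes $S_{V_i}(X)Q=QX^T$, substitutes $X=\varphi_i(E)$ and $X=\varphi_i(F)$ to find that $Q$ is anti-diagonal with explicit entries, uses the symmetry $\left[ \begin{array}{c} l-1\\ r-1\end{array}\right]=\left[ \begin{array}{c} l-1\\ l-r\end{array}\right]$ to get $Q_{r,l+1-r}/Q_{l+1-r,r}=(-q)^{l-2r+1}$, and sums these diagonal entries of $U=Q(Q^{-1})^T$ to obtain $\mu(V_i)=\sum_{j=0}^i(-q)^{i-2j}$.
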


\begin{cor}
For $0<i<n-1,\ \mu(V_i)\neq 0$ and $\displaystyle \mu(V_i)=(-1)^{i}dim_q(V_i).$ For the $n$-dimensional simple module $V_{n-1}=V(q^{-1}),\ \displaystyle \mu(V(q^{-1}))=dim_q(V_{n-1})=0.$
\end{cor}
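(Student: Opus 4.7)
The plan is to deduce the corollary directly from the trace formula of the preceding theorem by a short algebraic manipulation, then detect the vanishing locus of the resulting quantum integer. Starting from $\mu(V_i) = \sum_{j=0}^i (-q)^{i-2j}$, I would pull the sign out of the summand: since $2j$ is even, $(-q)^{i-2j} = (-1)^i q^{i-2j}$, giving
$$\mu(V_i) = (-1)^i \sum_{j=0}^i q^{i-2j} = (-1)^i Tr(\varphi_i(K)) = (-1)^i dim_q(V_i) = (-1)^i [i+1],$$
where I use the value $Tr(\varphi_i(K)) = [i+1]$ computed earlier in this section.

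Non-vanishing on the range $0 < i < n-1$ then reduces to a standard check on quantum integers. We have $[m] = (q^m - q^{-m})/(q - q^{-1}) = 0$ iff $q^{2m} = 1$, i.e.\ iff $n \mid 2m$; and since $n$ is odd this is equivalent to $n \mid m$. Applied with $m = i+1$ in the range $1 < i+1 < n$, divisibility fails, so $\mu(V_i) \neq 0$. For the $n$-dimensional module $V_{n-1} = V(q^{-1})$, the same identity yields $\mu(V_{n-1}) = (-1)^{n-1}[n] = [n]$ using that $n$ is odd; and $[n] = 0$ because $q^n = q^{-n} = 1$. Likewise $dim_q(V_{n-1}) = [n] = 0$, matching the stated value.

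The only bookkeeping point I would want to confirm is whether the preceding theorem's formula is intended to cover $i = n-1$, i.e.\ the module $V(q^{-1})$, which was described separately from the highest-weight family $V_i$. If so, the argument above is complete. If not, one needs to repeat the Skolem--Noether computation of a conjugating matrix $Q$ for the explicit action of $E$, $F$, $K$ on $V(q^{-1})$ recorded in the quoted theorem of \cite{K}, using the same substitutions $X = \varphi(E)$ and $X = \varphi(F)$ to pin down the antidiagonal entries $Q_{r,l+1-r}$, and then to verify the analogous summation. I expect this is the only subtlety; the clean relation $\mu(V_i) = (-1)^i dim_q(V_i)$ is otherwise essentially immediate from the parity observation above.
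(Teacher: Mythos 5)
Your proof is correct and takes essentially the same route as the paper: both read the corollary off the sum $\sum_{j=0}^i(-q)^{i-2j}$ from the preceding theorem, pulling out the sign $(-1)^i$ and recognizing the remainder as $[i+1]=\dim_q(V_i)$ (and the paper does intend the formula to cover $i=n-1$, since $q^n=1$ makes the $K$-eigenvalues of $V(q^{-1})$ coincide with those of the formal $V_{n-1}$, so your fallback computation is not needed). If anything your argument is the more complete one: the paper only remarks that the sum vanishes when exactly $n$ distinct powers of $q$ are added and leaves non-vanishing for $0<i<n-1$ implicit, whereas your criterion $[m]=0$ iff $n\mid m$ (using that $n$ is odd) actually establishes it — a point worth making explicit, since for non-prime odd $n$ a sum of fewer than $n$ distinct powers of a primitive $n$th root of unity can very well vanish.
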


\begin{proof}
In each case, the formula given in the theorem turns out to be a sum of at most $n$ powers of $q$. Since $q$ is a primitive $n$th root of unity, the sum is an algebraic integer for all $i$ and is $0$ when we add exactly $n$ different powers of $q$. 
\end{proof}
\bigskip

%--------------------------Section-----------------------------

\section{The Drinfeld double of the Taft algebra}
Next we introduce the irreducible representations of $D(H_n(q))$ by looking at an isomorphic Hopf algebra $H_n(1,q)$, following \cite {Ch1} and \cite{Ch2}.

Given a primitive $n$th root of unity $q\in k$, we use the description of the irreducible  $H_n(1,q)$-modules given in \cite{Ch2} to study which of those modules are self-dual. Since $D(H_n(q))\cong H_n(1,q),$ that determines 
how many of the simple modules of the Drinfeld Double of a Taft algebra are self-dual.\\

In \cite{Ch2}, Chen defines the Hopf algebra $H_n(p,q)$ where $n\in \mathbb{N} \ p,q\in k.$ We will consider the case when $p=1$ and $q$ is a primitive $n$th root of unity. 
$H_n(1,q)$ is an $n^4$-dimensional algebra generated by $a,b,c,d$ with relations

\begin{eqnarray*}
a^n=0,\ b^n=1,& c^n=1,\ d^n=0,\\
ba=qab,& db=qbd,\\
ca=qac,& dc=qcd,\\
bc=cb,& da-qad=1-bc.
\end{eqnarray*}

and antipode
$$S(a)=-ab^{-1},\ S(b)=b^{-1} ,\ S(c)=c^{-1} \ \mbox{and}\ S(d)=-dc^{-1} .$$
\indent To give the characterization of the $H_n(1,q)$-modules, we need to introduce some notation. 
For any  $i\in \mathbb{N},$ set $$(i)_q=1+q+\cdot \cdot \cdot +q^{i-1}=\frac{q^i-1}{q-1}$$ and 
$$(i)!_q=\frac{(q^i-1)(q^{i-1}-1)\cdot \cdot \cdot (q-1)}{(q-1)^i}.$$  
%%%%%%%%%
As noted in \cite[p 122]{K}, these are related to their $q$-analogues defined in the previous section via 

$$(i)_{q^2}=q^{i-1}[i],\ \text{and}\ (i)!_{q^2}=q^{\frac{i(i-1)}{2}}[i]!$$ \\
\indent Consider the canonical basis $\{x^ig^j:\ 0\leq i,j<n\}$ of the Taft algebra $H_n(q)$ and let $\{\overline{x^ig^j} \}$ be the dual basis of $H_n(q)^*.$  $H_n(1,q)$ is isomorphic to $D(H_n(q))$ via 
$$\overline{x^sg^t} \bowtie x^ig^j \to \sum_{0\leq m<n} \frac{1}{n(s)!_q}q^{-tm}c^md^sa^ib^j.$$

For any $l\in \mathbb{N}$ with $1\leq l\leq n,$ let $$\alpha_i(l)=(i)_q(1-q^{i-l}),\ 1\leq i\leq l-1.$$

Notice that $\alpha_1(l)=1-q^{1-l}$ and $q\alpha_{l-1}(l)=q^{l-1}-1.$ In general, 
$$\alpha_i(l)=1-q^{2i-1-l}+q\alpha_{i-1}(l),\ 1\leq i\leq l-1.$$
 
Let $1\leq l\leq n,\ r\in \mathbb{Z}.$ Then $V(l,r)$ is an $l$-dimensional simple $H_n(1,q)$-module, with action of the generators $a,\ b,\ c,\ d$ on some basis 
$\{v_1,\ v_2, ..., v_l\}$ of $V$ given by 

\begin{eqnarray*}
a\cdot v_i=v_{i+1},&1\leq i<l,&a\cdot v_l=0,\\
b\cdot v_i=q^{r+i-1}v_i,&c\cdot v_i= q^{i-(r+l)}v_i,&1\leq i\leq l,\\
d\cdot v_i=\alpha _{i-1}(l)v_{i-1},&1<i\leq l,&d\cdot v_1=0. \\
\end{eqnarray*}

\begin{thm} \cite{Ch2} \label{Chen}
Let $1\leq l,l'\leq n,\ r,r'\in \mathbb{Z}.$ Then $V(l,r)$ is isomorphic to $V(l',r')$ if and only if $l=l'$ and $r\equiv r' (mod\ n).$
\end{thm}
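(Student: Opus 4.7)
The plan is to treat the two implications separately. The content sits in the forward direction; the converse is essentially a tautology.

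For the forward direction, suppose $V(l,r)\cong V(l',r')$. First, comparing dimensions immediately gives $l=l'$. To recover $r$ modulo $n$, I would identify a canonical one-dimensional subspace of $V(l,r)$ on which the group-like element $b$ acts by a scalar that depends explicitly on $r$. The natural choice is $\ker a$: from the formulas $a\cdot v_i=v_{i+1}$ for $1\leq i<l$ together with $a\cdot v_l=0$, the kernel of the action of $a$ is precisely $\mathbb{C}v_l$. On this line $b$ acts by the scalar $q^{r+l-1}$. Since both $\ker a$ and the $b$-eigenvalue on it are isomorphism invariants of the module, any isomorphism $V(l,r)\to V(l,r')$ yields $q^{r+l-1}=q^{r'+l-1}$, and because $q$ has multiplicative order exactly $n$, this forces $r\equiv r'\pmod{n}$.

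For the converse, if $l=l'$ and $r\equiv r'\pmod{n}$, then $q^n=1$ makes every scalar appearing in the defining action coincide for the two modules: the eigenvalues $q^{r+i-1}$ and $q^{i-r-l}$ governing $b$ and $c$, and the coefficients $\alpha_{i-1}(l)=(i-1)_q(1-q^{i-1-l})$ governing $d$, are all functions of $r$ only through $r\bmod n$. The linear map sending the chosen basis $\{v_i\}$ of $V(l,r)$ to the chosen basis of $V(l,r')$ is therefore $H_n(1,q)$-equivariant and gives the required isomorphism.

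I do not expect a real obstacle in this proof; the only point requiring attention is to pick the correct invariant to detect $r$, and the line $\ker a=\mathbb{C}v_l$ is the cleanest canonical choice (one could equivalently use the top weight space of $b$, but $\ker a$ has the virtue of being intrinsically defined). Simplicity of $V(l,r)$ is already asserted just before the theorem, so it need not be re-established here.
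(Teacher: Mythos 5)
Your argument is correct and complete: $\ker a=\mathbb{C}v_l$ is an intrinsic invariant, the $b$-eigenvalue $q^{r+l-1}$ on it pins down $r$ modulo $n$ because $q$ has order exactly $n$, and the converse is immediate since every structure constant of $V(l,r)$ depends on $r$ only through $q^r$. The paper itself offers no proof of this statement --- it is quoted from \cite{Ch2} --- so there is nothing to compare against; your proof is a reasonable self-contained substitute for the citation.
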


Thus $V(l,r),\ 1\leq l,r\leq n$ is a complete set of representatives of the $n^2$ isomorphism classes of irreducible representations of $H_n(1,q).$
Given $V=V(l,r),$ we want to determine $l'$ and $s\in \mathbb{Z}$ such that $V^*\cong V(l',s).$ 

Since $dim(V)=dim(V^*),$ we know that $l=l'.$ To find $s$, we study the action of $H_n(1,q)$ on $V^*.$ 

First we need the following lemma.

\begin{lemma} \label{alpha}
Let $1< i\leq l-1.$ Then $$\alpha_{l-(i-1)}(l)q^{2(i-1)-l}=\alpha_{i-1}(l)$$
\end{lemma}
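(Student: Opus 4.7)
The plan is to prove the identity by direct computation, unfolding the definition $\alpha_i(l) = (i)_q(1 - q^{i-l}) = \frac{q^i-1}{q-1}(1-q^{i-l})$ on both sides and then matching them via a single exponent manipulation.

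First I would set $j = i-1$ for convenience (so $1 \leq j \leq l-2$), which reduces the claim to showing that $\alpha_{l-j}(l)\, q^{2j-l} = \alpha_j(l)$. Using the definition, the left-hand side becomes
\begin{equation*}
\alpha_{l-j}(l)\, q^{2j-l} \;=\; (l-j)_q \bigl(1 - q^{-j}\bigr)\, q^{2j-l} \;=\; \frac{q^{l-j}-1}{q-1}\cdot \bigl(1 - q^{-j}\bigr)\cdot q^{2j-l},
\end{equation*}
while the right-hand side is $\displaystyle \alpha_j(l) = \frac{q^j-1}{q-1}\bigl(1 - q^{j-l}\bigr)$.

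The key step is to redistribute the factor $q^{2j-l} = q^{j-l}\cdot q^{j}$ between the two factors on the left: absorbing $q^{j-l}$ into $q^{l-j}-1$ yields
\begin{equation*}
(q^{l-j}-1)\, q^{j-l} \;=\; 1 - q^{j-l},
\end{equation*}
and absorbing $q^{j}$ into $1 - q^{-j}$ yields $q^{j} - 1$. After this substitution both sides of the proposed identity become $\dfrac{(q^j-1)(1-q^{j-l})}{q-1}$, completing the proof.

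There is no real obstacle here: the lemma is a routine symmetry property of the $\alpha_i(l)$ under the substitution $i \leftrightarrow l-i+1$. The only thing to be careful about is the bookkeeping of the exponents of $q$, in particular splitting $q^{2(i-1)-l}$ as a product that pairs naturally with each of the two factors of $\alpha_{l-(i-1)}(l)$.
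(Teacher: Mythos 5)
Your proof is correct. Substituting $j=i-1$, expanding $\alpha_{l-j}(l)=\frac{q^{l-j}-1}{q-1}(1-q^{-j})$, and splitting $q^{2j-l}=q^{j-l}\cdot q^{j}$ so that $(q^{l-j}-1)q^{j-l}=1-q^{j-l}$ and $(1-q^{-j})q^{j}=q^{j}-1$ does indeed turn the left-hand side into $\frac{q^{j}-1}{q-1}(1-q^{j-l})=\alpha_{j}(l)$, and both indices $j$ and $l-j$ stay within the range $1,\dots,l-1$ where $\alpha$ is defined. This is a genuinely different route from the paper, which instead argues by induction on $i$: it checks the base case $i=2$ directly and then propagates the identity using the recursion $\alpha_i(l)=1-q^{2i-1-l}+q\alpha_{i-1}(l)$. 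Your direct computation from the closed form $\alpha_i(l)=(i)_q(1-q^{i-l})$ is shorter, avoids the induction bookkeeping (and the slightly awkward range conditions in the inductive step), and makes transparent that the lemma is just the symmetry of $\alpha$ under $i\leftrightarrow l-i$ up to the power of $q$; the paper's inductive argument, on the other hand, only uses the recursion, which is the form in which $\alpha$ is most often manipulated elsewhere in that section.
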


\begin{proof}
First, let $i=2$. 
$$\alpha_{l-1}(l)q^{2-l}=q\alpha_{l-1}(l) q^{1-l}=(q^{l-1}-1)q^{1-l}=1-q^{1-l}=\alpha _1 (l).$$

Now suppose $2<i<l-1.$ We show the condition holds for $i+1$. 
\begin{eqnarray*}
\alpha_{l-i}(l)q^{2i-l}&=&(\alpha_{l-(i-1)}(l)-1+q^{2(l-(i-1))-1-l})q^{2i-l-1}\\
                                    &=&q\alpha_{i-1}(l)-q^{2i-l-1}+1\\
                                    &=&\alpha_i(l)
\end{eqnarray*}
\end{proof}

Recall that a Hopf algebra $H$ acts on the dual of an $H$-module $V$ via the antipode $S$. That is $(h\cdot f)(v)=f(S(h)\cdot v).$ We use this rule to compute the action on  $\{e_1,..., e_l\},$ the basis of $V^*$ dual to $\{v_1,\ v_2, ..., v_l\}.$ 
\begin{eqnarray*}
(a\cdot e_i)(v_j)&=&e_i(S(a)\cdot v_j)=-e_i(ab^{-1}\cdot v_j)\\
                          &=&-q^{-(r+j-1)}e_i(a\cdot v_j)=-q^{-(r+j-1)}e_i(v_{j+1})\\
                          &=&-q^{-(r+j-1)}\delta _{i,j+1},\ if \ 1\leq j<l.\\
\end{eqnarray*}
When $j=l,\ a\cdot v_j=0$ and $(a\cdot e_i)(v_j)=0\ \forall i.$ 
%Thus $$a\cdot e_i=-q^{-(r+i-2)}e_{i-1},\ 1<i\leq l, \ a\cdot e_1=0.$$ 

$$(b\cdot e_i)(v_j)=e_i(S(b)\cdot v_j)=e_i(b^{-1}\cdot v_j)=q^{-(r+j-1)}e_i(v_j) =q^{-(r+i-1)}\delta _{i,j} ,$$

$$(c\cdot e_i)(v_j)=e_i(S(c)\cdot v_j)=e_i(c^{-1}\cdot v_j)=q^{r+l-j}e_i(v_j)=q^{r+l-i}\delta _{i,j}.$$
                          
%Thus $$b\cdot e_i=q^{1-(r+i)}e_i,\ c\cdot e_i=q^{r+l-i}e_i,\ 1\leq i\leq l.$$ 

\begin{eqnarray*}
(d\cdot e_i)(v_j)&=&e_i(S(d)\cdot v_j)=-e_i(dc^{-1}\cdot v_j)\\
                         &=&-q^{r+l-j}e_i(d\cdot v_j)=-q^{r+l-j}\alpha _{j-1}(l)e_i(v_{j-1})\\
                         &=&-q^{r+l-(i+1)}\alpha _i(l)\delta _{i,j-1},\ if \ 1\leq i<l.\\
\end{eqnarray*}                         

When $j=l,\ d\cdot v_j=0$ and $(d\cdot e_i)(v_j)=0\ \forall i.$
%Thus $$d\cdot e_i=-q^{r+l-(i+1)}\alpha _i(l)e_i,\ 1\leq i<l,\ d\cdot e_l=0.$$

Thus 
\begin{eqnarray*}
a\cdot e_i=-q^{-(r+i-2)}e_{i-1},&1< i\leq l,&a\cdot e_1=0,\\
b\cdot e_i=q^{1-(r+i)}e_i,&c\cdot e_i= q^{r+l-i}e_i,&1\leq i\leq l,\\
d\cdot e_i=-q^{r+l-(i+1)}\alpha _i(l)e_i,&1\leq i< l,&d\cdot e_l=0. \\
\end{eqnarray*}

Comparing these results with the action on the modules $V(l,r)$, we see that each $e_i$ is a common eigenvector of $b$ and $c$, as required. However for the action of $a$ and $d$ to be consistent, we need to reverse the order of the basis elements. 

\begin{thm}
If $V=V(l,r)$, then $V^*\cong V(l,1-(r+l)).$ Moreover, if we let $s=1-(r+l)$ and consider the basis  
$$\{ g_i=(-1)^{i-1}q^{-s(l-i)+\frac {i(i-1)}{2}}f_i,\ 1\leq i\leq l\}$$
of $V^*,$ where  $f_i=e_{l-(i-1)},$ then 
  
\begin{eqnarray*}
a\cdot g_i=g_{i+1},&1\leq i<l,&a\cdot g_l=0,\\
b\cdot g_i=q^{s+i-1}g_i,&c\cdot v_i= q^{i-(s+l)}g_i,&1\leq i\leq l,\\
d\cdot g_i=\alpha _{i-1}(l)g_{i-1},&1<i\leq l,&d\cdot g_1=0. \\
\end{eqnarray*} 
  
%In particular, $V^*\cong V(l,s).$
\end{thm}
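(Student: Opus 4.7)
The plan is to prove the isomorphism $V^*\cong V(l,1-(r+l))$ by exhibiting a basis of $V^*$ on which $a,b,c,d$ act by exactly the formulas that define $V(l,s)$ with $s=1-(r+l)$. The actions of $a,b,c,d$ on the dual basis $\{e_1,\dots,e_l\}$ have already been computed in the paragraphs preceding the theorem, so what remains is a two-step change of basis: first reverse the order, then rescale.

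Set $f_i=e_{l-i+1}$. The actions of $b$ and $c$ on $\{f_i\}$ follow by direct substitution and yield the eigenvalues $q^{s+i-1}$ and $q^{i-(s+l)}$ once $s=1-(r+l)$ is identified. The actions of $a$ and $d$ now move the index in the correct direction but carry extraneous factors: one finds $a\cdot f_i=-q^{i+s}f_{i+1}$, and, after invoking Lemma \ref{alpha} to rewrite $\alpha_{l-i+1}(l)$ as $q^{l-2i+2}\alpha_{i-1}(l)$, one obtains $d\cdot f_i=-q^{r+l-i}\alpha_{i-1}(l)f_{i-1}$. This application of Lemma \ref{alpha} is the only nontrivial step; the lemma is designed precisely to handle the reversal of the $\alpha$-indexing.

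The rescaling $g_i=(-1)^{i-1}q^{-s(l-i)+i(i-1)/2}f_i$ is then engineered to absorb the remaining scalars. The sign $(-1)^{i-1}$ cancels the minus produced each time $a$ or $d$ is applied; the linear term $-s(l-i)$ in the exponent telescopes against the constant $s$ that appears in the exponent at each application of $a$; and the quadratic term $i(i-1)/2$ absorbs the extra linear-in-$i$ power of $q$. With this rescaling, verifying $a\cdot g_i=g_{i+1}$ and $d\cdot g_i=\alpha_{i-1}(l)g_{i-1}$ reduces in each case to the single identity $s+r+l-1=0$, which is exactly the defining equation for $s$.

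The main difficulty is guessing the correct rescaling; once $g_i$ is in hand, the verification is pure bookkeeping with $q$-exponents. Matching actions of the generators then gives the module isomorphism $V^*\cong V(l,s)$, with Theorem \ref{Chen} confirming that $V(l,s)$ is the intended target up to isomorphism.
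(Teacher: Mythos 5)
Your proposal is correct and follows essentially the same route as the paper's proof: reverse the dual basis to get $f_i=e_{l-(i-1)}$, read off $s=1-(r+l)$ from the $b$- and $c$-eigenvalues, and then check that the rescaling by $(-1)^{i-1}q^{-s(l-i)+i(i-1)/2}$ absorbs the extraneous scalars in the $a$- and $d$-actions, with Lemma \ref{alpha} handling the reindexing of $\alpha_{l-(i-1)}(l)$. The only (immaterial) difference is that you apply Lemma \ref{alpha} before the rescaling while the paper applies it afterward.
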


\begin{proof}
Since $f_i=e_{l-(i-1)},$ then $f_1=e_l,\ f_2=e_{l-1}, ..., f_l=e_1$ and the action is given by

\begin{eqnarray*}
a\cdot f_i=-q^{i+1-(r+l)}f_{i+1},&1\leq i<l,&a\cdot f_l=0,\\
b\cdot f_i=q^{i-(r+l)}f_i,&c\cdot v_i= q^{i+r-1}f_i,&1\leq i\leq l,\\
d\cdot f_i=-\alpha _{l-(i-1)}(l)q^{r+i-2}f_{i-1},&1<i\leq l,&d\cdot f_1=0. \\
\end{eqnarray*} 

Because the actions on the basis $\{f_i\}$ agree with the ones describing $V(l,1-(r+l))$ up to some constants, we can conclude that $s=1-(r+l).$ Since each  $f_i$ is an eigenvector for both $b$ and $c,$ so is each $g_i$ and it corresponds to the same eigenvalue. We still have to check the actions of $a$ and $d$ on this new basis.

Let $1\leq i<l.$ Then
\begin{eqnarray*}
a\cdot g_i&=&(-1)^{i-1}q^{-s(l-i)+\frac{i(i-1)}{2}}a\cdot f_i\\
                  &=& (-1)^{i-1}q^{-s(l-i)+\frac{i(i-1)}{2}}(-q^{i+s})f_{i+1}\\
                  &=&(-1)^iq^{-s(l-(i+1))+i+\frac{i(i-1)}{2}}f_{i+1}\\
                  &=&g_{i+1}.\\
\end{eqnarray*} 

When $i=l$ we have

\begin{eqnarray*}
a\cdot g_l&=&((-1)^{l-1}q^{\frac{l(l-1)}{2}})a\cdot f_l=0.\\
\end{eqnarray*} 

Now let $1<i\leq l.$ Then
\begin{eqnarray*}
d\cdot g_i&=&(-1)^{i-1}q^{-s(l-i)+\frac{i(i-1)}{2}}d\cdot f_i\\
                  &=& (-1)^{i-1}q^{-s(l-i)+\frac{i(i-1)}{2}}(-\alpha_{l-(i-1)}(l))q^{-s-(l-(i-1))}f_{i-1}\\
                  &=&(-1)^{i-2}q^{-s(l-(i-1))+\frac{(i-1)(i-2)}{2}}q^{2(i-1)-l}\alpha _{l-(i-1)}(l)f_{i-1}\\
                  &=&q^{2(i-1)-l}\alpha _{l-(i-1)}(l)g_{i-1}\\
                  &=&\alpha_{i-1}g_{i-1},\\
\end{eqnarray*} 
where the last equality follows from \ref{alpha}.

When $i=1$ we have 
 \begin{eqnarray*}
 d\cdot g_1&=&q^{-s(l-1)}d\cdot f_1=0.
 \end{eqnarray*}

\end{proof}

We can now determine which of the $H_n(1,q)$-modules are self-dual.

\begin{thm}
Let $n\in \mathbb{N}$ and $1\leq l,r \leq n$.  $V(l,r)$ is a self-dual $H_n(1,q)$-module only in the following cases:
\begin{enumerate}
\item
When $n$ is odd, there is exactly one $l$-dimensional self-dual  module $V(l,r)$ for each $l$. If $l$ is odd, $r=n+\frac{1-l}{2}$. If $l$ is even, $r=\frac{n+1-l}{2}.$
\item
When $n$ is even, there are two self-dual modules $V(l,r)$ for each odd $l$, given by $r=\frac{n+1-l}{2}$ and $r=n+\frac{1-l}{2}.$
\end{enumerate} 
\end{thm}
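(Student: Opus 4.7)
The plan is to combine the preceding theorem, which identifies $V(l,r)^* \cong V(l, 1-(r+l))$, with the classification in Theorem \ref{Chen} to convert self-duality into a single linear congruence modulo $n$. Concretely, $V(l,r)$ is self-dual if and only if $V(l,r) \cong V(l, 1-(r+l))$, which by Theorem \ref{Chen} holds precisely when $r \equiv 1-(r+l) \pmod{n}$, i.e.
\[ 2r \equiv 1 - l \pmod{n}. \]
The remainder of the proof is to count and exhibit the solutions $r$ with $1 \leq r \leq n$ of this congruence, which splits according to the parity of $n$.

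When $n$ is odd, $2$ is a unit modulo $n$, so the congruence has a unique solution in $\{1,\ldots,n\}$ for every $l$. If $l$ is odd then $(1-l)/2 \in \mathbb{Z}$, and one checks that $r = n + (1-l)/2$ satisfies $2r = 2n + 1 - l \equiv 1-l \pmod{n}$ with $1 \leq r \leq n$. If $l$ is even then $n+1-l$ is even (since $n$ is odd), and $r = (n+1-l)/2$ is the required unique solution in the standard range.

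When $n$ is even, the congruence $2r \equiv 1-l \pmod{n}$ has a solution if and only if $\gcd(2,n)=2$ divides $1-l$, i.e.\ if and only if $l$ is odd; in this case it has exactly two solutions modulo $n$, differing by $n/2$. For $l$ odd, $(n+1-l)/2$ and $n+(1-l)/2$ are both integers (the first because odd minus odd is even, the second because $l$ is odd) and both lie in $\{1,\ldots,n\}$, so these are the two self-dual representatives. For $l$ even there are no solutions, so no self-dual module of dimension $l$ exists.

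The key step is the reduction to the congruence, which is immediate from the preceding theorem and Theorem \ref{Chen}. The only real care needed is the arithmetic bookkeeping: verifying divisibility so the stated formulas yield integers, and checking that the representatives fall in the range $\{1,\ldots,n\}$ used by Theorem \ref{Chen}. I do not expect a serious obstacle here.
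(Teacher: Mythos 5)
Your proposal is correct and follows essentially the same route as the paper: both reduce self-duality via $V(l,r)^*\cong V(l,1-(r+l))$ and Theorem \ref{Chen} to the congruence $2r\equiv 1-l \pmod n$ and then solve it by parity considerations (the paper enumerates the possible values $2r+l-1\in\{n,2n\}$ in the range $2\le 2r+l-1\le 3n-1$, while you invoke the standard $\gcd(2,n)$ solvability criterion, which amounts to the same bookkeeping).
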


\begin{proof}
Fix $n\in \mathbb{N}$. We know that $V(l,r)^*\cong V(l,1-(r+l))$. Then by \ref{Chen}, $V(l,r)$ is self-dual if and only if $r\equiv_n 1-(r+l).$ Equivalently, if and only if $n|\ 2r+l-1.$ If we consider $1\leq r,l\leq n$ then $2\leq 2r+l-1\leq 3n-1.$ Thus the only cases to consider are $2r+l-1=n$ and $2r+l-1=2n.$   

\begin{enumerate}
\item
If, given $l$,  $2r+l-1=n$ then $r=\frac{n+1-l}{2}\in \mathbb{N}$ if and only if $n$ and $l$ have different parities.
\item
If, given $l$, $2r+l-1=2n$ then $r=\frac{2n+1-l}{2}=n+\frac{1-l}{2} \in \mathbb{N}$ if and only if $l$ is odd.
\end{enumerate}
$V(l,r)$ is self-dual only for the pairs $(l,r)$ above.
\end{proof}

We intend to use the information we have already computed for the representations of $u_q(sl_2)$ to determine the indicators for simple modules of the double. When $n$ is odd, $q$ is a primitive $nth$ root of $1$ if and only if $q^2$ is. We can project $H_n(1,q^{2})$ onto $u_q(sl_2)$ and through this map we obtain the indicators for the simple $H_n(1,q^{2})$-modules.

\begin{prop}
The map $\psi: H_n(1,q^2) \to u_q(sl_2)$ given by $\psi(a)=E,\ \psi(b)=K,\ \psi (c)=K$ and $\psi(d)=\displaystyle \frac{q-q^{-1}}{q^2}FK$ is a surjective Hopf algebra morphism.  
\end{prop}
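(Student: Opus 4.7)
The plan is to verify that $\psi$ preserves each of the three Hopf algebra structures in turn—multiplication, comultiplication, and antipode—and then to observe that surjectivity is automatic. Since $H_n(1,q^2)$ is presented by generators and relations, the first step is to check that the images $\psi(a)=E$, $\psi(b)=\psi(c)=K$, and $\psi(d)=\frac{q-q^{-1}}{q^2}FK$ satisfy the nine defining relations of $H_n(1,q^2)$ inside $u_q(sl_2)$.

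For the commutation relations, I would observe that $ba=q^2ab$ and $ca=q^2ac$ both reduce to $KE=q^2EK$, which follows from $KEK^{-1}=q^2E$; similarly $db=q^2bd$ and $dc=q^2cd$ reduce, after cancelling the scalar $\frac{q-q^{-1}}{q^2}$, to $FK^2=q^2KFK$, equivalent to $KF=q^{-2}FK$; and $bc=cb$ is trivial because both generators map to $K$. For the nilpotent and group-like relations, $\psi(b)^n=\psi(c)^n=K^n=1$ and $\psi(a)^n=E^n=0$ are immediate, while a short induction using $KF=q^{-2}FK$ yields $(FK)^j=q^{-j(j-1)}F^jK^j$, so that $\psi(d)^n$ is a scalar multiple of $F^nK^n=0$.

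The step I expect to require the most care is the inhomogeneous relation $da-q^2ad=1-bc$. Expanding, $\psi(da)-q^2\psi(ad)=\frac{q-q^{-1}}{q^2}(FKE-q^2EFK)$; using $KE=q^2EK$ to move $K$ past $E$ collapses this to $(q-q^{-1})(FE-EF)K=-(q-q^{-1})[E,F]K$, which by the defining bracket of $u_q(sl_2)$ equals $-(K-K^{-1})K=1-K^2=1-\psi(bc)$, as required.

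For the coalgebra and antipode compatibility, I would check $(\psi\otimes\psi)\Delta=\Delta\psi$, $\epsilon\psi=\epsilon$, and $S\psi=\psi S$ on each generator using the standard coproduct of $H_n(1,q^2)$ (in which $b,c$ are group-like and $a,d$ are the natural skew-primitives) together with the antipode formulas already recorded. The checks on $a,b,c$ are immediate; on $d$, the antipode check reduces to $-\frac{q-q^{-1}}{q^2}F$ on both sides, while the coproduct check follows from $\Delta(FK)=\Delta(F)\Delta(K)=1\otimes FK+FK\otimes K$. Surjectivity is then automatic: the image contains $E$ and $K$, hence also $K^{-1}=K^{n-1}$ and therefore $F=\frac{q^2}{q-q^{-1}}\psi(d)K^{-1}$, so $\psi$ hits a generating set of $u_q(sl_2)$.
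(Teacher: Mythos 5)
Your proof is correct, but it takes a genuinely different route from the paper. The paper does not verify anything directly: it obtains $\psi$ as a composition of Hopf algebra maps, citing Kassel (pp.\ 227--228) for a projection of a Hopf algebra isomorphic to $H_n(1,q^2)$ onto $u_q(sl_2)$. You instead check the axioms by hand on generators: that $E$, $K$, $K$, $\frac{q-q^{-1}}{q^2}FK$ satisfy the defining relations of $H_n(1,q^2)$ (including the only delicate one, $da-q^2ad=1-bc$, where your computation $\frac{q-q^{-1}}{q^2}(FKE-q^2EFK)=-(q-q^{-1})[E,F]K=1-K^2$ is right), that $\psi$ is a coalgebra map, that it intertwines antipodes, and that its image contains $E$, $K$, $K^{-1}=K^{n-1}$ and hence $F$. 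All of these computations check out; the one point you should be explicit about is that the paper never writes down the coproduct of $H_n(1,q^2)$, so your identification of $a$ and $d$ as skew-primitives ($\Delta(a)=a\ot b+1\ot a$, $\Delta(d)=d\ot c+1\ot d$) needs justification --- it is in fact forced by the stated antipode formulas $S(a)=-ab^{-1}$, $S(d)=-dc^{-1}$ together with the antipode axiom, and it is the convention in Chen's papers, but as written it is an unstated input. The trade-off: your argument is self-contained and makes the scalar $\frac{q-q^{-1}}{q^2}$ transparent (it is exactly what is needed to match the bracket relation), whereas the paper's argument is shorter but outsources the entire content to a reference and does not explain where that scalar comes from. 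A small cosmetic point: there are ten defining relations, not nine, though you do treat all of them. Also note that for a bialgebra morphism between Hopf algebras the compatibility with antipodes is automatic, so that check, while harmless, is redundant.
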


\begin{proof}
We obtain $\psi$ as a composition of Hopf algebra maps, using a projection of a Hopf algebra isomorphic to $H_n(1,q^2)$ onto $u_q(sl_2),$ as given in \cite[p 227-228]{K}. 
\end{proof}

Every simple $u_q(sl_2)$-module is thus an $H_n(1,q^2)$ simple module. But then each of them has to agree with one of the $V(l,r),$ as described above. 

\begin{thm}
When looked at as an $H_n(1,q^2)$-module via $\psi,$ the simple (and self-dual) $u_q(sl_2)$-module $V_i$ corresponds to $V(i+1,n-\frac{i}{2})$ if $i$ is even, and $V(i+1,\frac{n-i}{2})$ if $i$ is odd. $V(q^{-1})$ corresponds to the $n$-dimensional module $V(n,\frac{n+1}{2}).$
\end{thm}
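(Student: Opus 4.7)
The plan is to pull the $u_q(sl_2)$-module structure on each simple module back through $\psi$ and exhibit an explicit isomorphism with the claimed module $V(l,r)$. Since the dimensions of source and target match by inspection ($\dim V_i = i+1$ and $\dim V(q^{-1})=n$), by Theorem \ref{Chen} it suffices to produce, in each case, a basis of the pulled-back module on which the generators $a,b,c,d$ act by the defining formulas of $V(l,r)$.

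Because $\psi(a)=E$ lowers the $u_q$-weight while $a$ raises the index in the $V(l,r)$-presentation, I would first reverse the natural $u_q$-basis. In $V_i$ set $w_k := \lambda_k v_{i+1-k}$ for $1\le k\le i+1$, and in $V(q^{-1})$ set $w_k := \mu_k v_{n-k}$ for $1\le k\le n$. Choosing these scalars inductively so that $a\cdot w_k = w_{k+1}$ forces, using the $u_q$-action formulas of Section~3, the recursions $\lambda_{k+1}=[k]\lambda_k$ and $\mu_{k+1}=-[n-k]^2 \mu_k$; I would fix $\lambda_1=\mu_1=1$.

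Next I would read off the eigenvalues of $b=c=\psi(b)=\psi(c)=K$ on $w_k$ and match them against $b\cdot v_k=(q^2)^{r+k-1}v_k$ and $c\cdot v_k=(q^2)^{k-(r+l)}v_k$. Computing modulo $q^n=1$ turns this into a single linear congruence on $r$ whose unique solution in $\{1,\dots,n\}$ pins down the value asserted in the theorem: $r=n-i/2$ for $V_i$ with $i$ even, $r=(n-i)/2$ for $V_i$ with $i$ odd, and $r=(n+1)/2$ for $V(q^{-1})$.

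Finally I would verify that $d = \frac{q-q^{-1}}{q^2} FK$ acts on $w_k$ as $\alpha_{k-1}(l) w_{k-1}$. Using the bridge identity $(m)_{q^2}=q^{m-1}[m]$ recorded in Section~4, the defining formula $\alpha_{k-1}(l)=(k-1)_{q^2}\bigl(1-q^{2(k-1-l)}\bigr)$ rewrites directly in terms of the $q$-integers $[\,\cdot\,]$ used for $u_q(sl_2)$, and a direct substitution matches it with the coefficient produced by $FK$ combined with the ratio $\lambda_k/\lambda_{k-1}$ (resp.\ $\mu_k/\mu_{k-1}$). I expect the main obstacle to be this last step for $V(q^{-1})$: the $F$-action there is $Fv_j=v_{j+1}$ with no $q$-integer factor and the $E$-action carries the extra sign-bearing factor $-[m]^2$, so the recursion for $\mu_k$ looks qualitatively different from the one for $\lambda_k$. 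What rescues the calculation is the identity $[n-m]=-[m]$, valid once $q^n=1$, which collapses the resulting expression to $\alpha_{k-1}(n)$ and completes the identification with $V(n,(n+1)/2)$.
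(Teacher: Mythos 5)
Your proposal is correct and follows essentially the same route as the paper: pull the module back through $\psi$, reverse the basis so that $a$ raises the index, and read off $r$ from the $K$-eigenvalues via the congruence $2r\equiv -i \pmod{n}$ (unique solution since $n$ is odd). The paper in fact stops after matching the $b$- and $c$-eigenvalues, leaning on simplicity of the pullback and the uniqueness of the self-dual module of each dimension, so your explicit rescaling $w_k=\lambda_k v_{i+1-k}$ and direct verification of the $d$-action --- including the $[n-m]=-[m]$ reduction needed for $V(q^{-1})$ --- is more complete than what appears in print, and the computations do check out.
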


\begin{proof}
Consider the basis $\{v_p:\ 0\leq p\leq i\}$ of $V_{1,i}.$ Then
$$a\cdot v_p=\psi(a)v_p=Ev_p=[i-p+1]v_{p-1},$$
$$b\cdot v_p=\psi(b)v_p=Kv_p=q^{i-2p}v_p,$$
$$c\cdot v_p=\psi(c)v_p=Kv_p=q^{i-2p}v_p$$
and
$$d\cdot v_p=\psi(d)v_p=\frac{q-q^{-1}}{q^2}FKv_p=\frac{q-q^{-1}}{q^2}q^{i-2p}[p+1]v_{p+1}.$$

The actions of $a$ and $d$ suggest that the order of the vectors in the basis should be changed. If we reverse the basis and consider $\{w_p=v_{i-p}\},$ the action of $b$ and $c$ on this basis is given by the matrix 
 
$$\left( \begin{array}{ccccc}
q^{-i} & 0 & \cdot & \cdot & 0\\
0 & q^{2-i} & \cdot & \cdot & \cdot\\
\cdot &  & \cdot & & \cdot\\ 
\cdot &  &  & \cdot & 0\\
0 & \cdot & \cdot & \cdot & q^i\\                 
\end{array}\right) $$

which agrees with the action of $b$ and $c$ on $V(i+1, r)$ where $r$ is so that $2r\equiv _n -i:$
in that case $q^{2p-i}=q^{2p+2r}=(q^2)^{(p+r)},$ and changing the indices to $j=1,\cdots i+1$ as in the definition of $V(l,r)$ the $j$th entry in the diagonal becomes $(q^2)^{j+r-1}.$ This happens to be the unique self-dual $i+1$-dimensional representation of $H_n(1,q^2),$ as it should be. 
\end{proof}

We can now determine our original goal: compute the invariant for the representations of the double of the Taft algebra, combining all the preceding results.%using the projection of $D(H_n(q^2))$ onto $u_q(sl_2)$combined with the values of $\mu$ computed in the previous section.

\begin{thm}
Let $q$ be a primitive $nth$ root of unity, and consider the Hopf algebra $D(H_n(q^2))\cong H_n(1,q^2),$ as described above. The trace associated to the $l$-dimensional module $V(l,r)$ is given by 

$$
\mu(V(l,r))=
\left\{ \begin{array}{cl}
\displaystyle \sum_{j=1}^l (-q)^{l-2j+1},\ &  \mbox{if}\ 2r\equiv_n 1-l,\\
0 & \mbox{otherwise.}
\end{array}\right .
$$

\end{thm}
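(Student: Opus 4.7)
The plan is to reduce the computation to the one already established for $u_q(sl_2)$. Three small tasks need to be handled: dispose of the non-self-dual case, identify each self-dual $V(l,r)$ as a pull-back along $\psi$ of a $u_q(sl_2)$-module, and translate the resulting value of $\mu$ into the claimed expression.

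First, by Definition \ref{ind}, if $V(l,r)$ is not self-dual then $\mu(V(l,r))=0$. Since $q^2$ must be a primitive $n$th root of unity for $H_n(1,q^2)$ to have the structure described, $n$ is odd. The self-duality classification then gives $V(l,r)^{*}\cong V(l,r)$ precisely when $2r\equiv_n 1-l$, which accounts for the dichotomy in the statement.

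Next, assume $2r\equiv_n 1-l$. The theorem identifying $u_q(sl_2)$-modules as pull-backs via $\psi$ shows that $V_{l-1}$ (for $1\le l\le n-1$), respectively $V(q^{-1})$ (for $l=n$), becomes the unique $l$-dimensional self-dual module $V(l,r)$. The commutative diagram in Section 2 applied to $\Phi=\psi$ yields $\mu(V(l,r))=\mu(V_{l-1})$, respectively $\mu(V(l,r))=\mu(V(q^{-1}))$. Substituting $i=l-1$ into the formula $\mu(V_i)=\sum_{j=0}^{i}(-q)^{i-2j}$ and reindexing $k=j+1$ gives $\sum_{k=1}^{l}(-q)^{l-2k+1}$, exactly as claimed; for $l=n$ this sum already vanishes, matching $\mu(V(q^{-1}))=0$ from the preceding corollary.

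The proof is therefore a routine assembly of earlier results, and the only conceptual point requiring care, which is the main obstacle, is to verify that the classification of self-dual $V(l,r)$ produces exactly one module per dimension $1\le l\le n$ (for $n$ odd), matching the number of simple $u_q(sl_2)$-modules; together with the explicit identification via $\psi$, this guarantees that the pull-back hits every self-dual $V(l,r)$ for which the first branch of the formula applies, leaving no case unaccounted for.
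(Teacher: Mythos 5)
Your proposal is correct and follows exactly the route the paper intends: the paper offers no written proof of this theorem, saying only that it is obtained "combining all the preceding results," and your assembly — the self-duality classification ($2r\equiv_n 1-l$, with $\mu=0$ otherwise by Definition \ref{ind}), the identification of the unique self-dual $V(l,r)$ of each dimension with a simple $u_q(sl_2)$-module pulled back along $\psi$ together with $\mu(V)=\mu(V')$, and the substitution $i=l-1$ into $\mu(V_i)=\sum_{j=0}^i(-q)^{i-2j}$ — is precisely that combination. The observation that the $l=n$ case is consistent because the $n$-term sum of distinct powers of $q$ vanishes is also correctly handled.
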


%%%%%% Section: acknowledgment %%%%%%%%%%
\section*{Acknowledgments}
Most of this work appeared in the author's PhD dissertation at the University of Southern California. I am grateful to the Mathematics Department at USC and in particular my advisor Susan Montgomery for their support.
I would also like to thank Siu-Hung Ng for suggesting the connection of the trace invariant to quantum dimension.

\end{document}